\title{Notes on an Elementary Proof for the Stability of Persistence Diagrams}
\author{Primoz Skraba \and Katharine Turner}
\newtheorem{theorem}{Theorem}[section]
\newtheorem{lemma}{Lemma}[section]
\newtheorem{definition}{Definition}[section]
\newtheorem{exercise}{Exercise}[section]
\newtheorem{remark}{Remark}[section]
\newtheorem{assumption}{Assumption}[section]
\newcommand{\dgm}{\mathrm{Dgm}}
\begin{document}
\maketitle
\section{Setup}
These notes provide a self contained algorithmic proof bottleneck stability of persistence diagrams.  The only assumption is familiarity with
the standard persistence algorithm. The proof technique itself is a special case of the proof of  $p$-Wasserstein stability for cellular complexes in  \cite[Section 3]{wasserstein}. The proof is further simplified due to the use of bottleneck stability. Similar ideas can also be found in ~\cite{vineyards}. 

As input, fix a finite simplicial complex $K$, endowed with two functions $f_0,f_1,:K\rightarrow  \mathbb{R}$, which we assume satisfy the following conditions:
\begin{enumerate}
\item The functions are piecewise constant, i.e. the function assign  a function value to each simplex -- so for any simplex $\sigma \in K$, the notion $f_0(\sigma)$ and $f_1(\sigma)$ make sense.
\item The functions are bounded, i.e. for all $\sigma \in K$, $|f_0(\sigma)|<\infty $ and $|f_1(\sigma)|<\infty $  
\item For any $\alpha \in \mathbb{R}$, the sublevel sets $f_0^{-1}(-\infty,\alpha]$  and $f_1^{-1}(-\infty,\alpha]$  are simplicial complexes.
\end{enumerate}
Notice that the above conditions are just needed for the standard persistence algorithm from \cite{algorithm} to be well-defined. The function defines an ordering on the simplices. If each simplex has a unique function value, then the ordering is total (a linear order). Otherwise, we can extend the partial order  to a total order. If two simplices have the same function value, they are ordered according to increasing dimension (to ensure that at each step in the ordering is a valid simplicial complex). If they have the same dimension, an arbitrary ordering can be  chosen (e.g. lexicographical ordering). Our statement will involve the persistence diagrams of the sub-level set filtrations of $f_0$ and $f_1$, which we denote $\dgm(f_0)$ and $\dgm(f_1)$ respectively.

 We recall the following definitions:
 \begin{definition}
 For every point in a diagram $p\in\dgm$,
 \begin{itemize}
 \item $b(p):$ birth time ($x$-coordinate)
 \item $d(p):$ death time ($y$-coordinate)
 \end{itemize} 
 \end{definition}
There are several distances between diagrams which have been defined. but the \emph{bottleneck distance} is perhaps the best known.  The distances are all defined in terms of a matching. A matching in our context is a set map, $\pi$ between the between the points in the two diagrams.  
\begin{definition}
 The bottleneck distance between two diagrams is given by
 $$ d_B(\dgm(f_0),\dgm(f_1)) = \inf\limits_{\pi\in \mathrm{bijections}} \max\limits_{p \in \dgm(f_0)} \max\{b(p) -d(\pi(p)),b(p) -d(\pi(p))\}$$ 
 \end{definition}
 In the above theorem, we take the infimum over all bijections between the diagrams. It is important to observe that we are taking the algorithmic definition of the diagram where the points in the diagram are precisely the pairs of simplices returned by the algorithm. This may include points with multiplicity as well as points on the diagonal, i.e. cycles which immediately become bounded. 
 In the general case, we must allow points to get matched to an arbitrary point on the diagonal of the diagram. However, in this restricted setting, the number of points is always equal and we can consider the distance over all bijections without the addition of the diagonal -- although some of the points may lie on the diagonal (see the following exercises).  
 \begin{exercise}
 Prove that for a fixed simplicial complex, the number of points in the persistence diagram is always the same (regardless of the filtration function).
 \end{exercise}
 \begin{exercise}
 Prove that the bottleneck distance where we allow points to map to the diagonal is smaller than the distance defined above.
 \end{exercise}
 \begin{definition}
 $$||f_0-f_1||_\infty = \max\limits_{\sigma \in K} | f_0(\sigma)-f_1(\sigma) |$$
 \end{definition}
 
 We will  prove the following theorem:
\begin{theorem}\label{thm:main}
$$d_B(\dgm(f_0),\dgm(f_1)) \leq ||f_0-f_1||_\infty$$
where $d_B(\cdot)$  is the bottleneck distance. 
\end{theorem}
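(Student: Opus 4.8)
\section*{Proof proposal}

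The plan is to reduce the theorem to the case in which $f_0$ and $f_1$ induce the \emph{same} total order on the simplices of $K$ -- a case that is essentially immediate -- and then to bridge from the general case to this special case by a linear interpolation, tracking how the diagram changes each time the order changes. Concretely, set $f_t=(1-t)f_0+tf_1$ for $t\in[0,1]$. Each $f_t$ again satisfies conditions (1)--(3) (for condition (3): if $\sigma\subseteq\tau$ then $f_0(\sigma)\le f_0(\tau)$ and $f_1(\sigma)\le f_1(\tau)$, hence $f_t(\sigma)\le f_t(\tau)$), and $\|f_s-f_t\|_\infty=|s-t|\,\|f_0-f_1\|_\infty$. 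The induced simplex order is constant except at finitely many values of $t$, and the whole argument comes down to understanding those finitely many order changes.

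\textbf{Same-order case.} Suppose $f$ and $g$ induce the same total order on the simplices. Since the execution of the standard persistence algorithm -- the column reduction of the boundary matrix with rows and columns indexed in filtration order -- depends only on this order, it returns the same set of positive/negative simplex pairs and the same set of unpaired (essential) simplices for both $f$ and $g$. Hence there is a canonical bijection $\pi:\dgm(f)\to\dgm(g)$ sending the point with birth/death pair $(\sigma,\tau)$ to the point with the same pair $(\sigma,\tau)$, and the essential point born at $\sigma$ to the essential point born at $\sigma$. For a finite pair, $\|p-\pi(p)\|_\infty=\max\{|f(\sigma)-g(\sigma)|,|f(\tau)-g(\tau)|\}\le\|f-g\|_\infty$; for an essential point the two death coordinates are both $+\infty$ and the births differ by at most $\|f-g\|_\infty$. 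Therefore $d_B(\dgm(f),\dgm(g))\le\|f-g\|_\infty$ whenever $f$ and $g$ induce the same order.

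\textbf{Transposition lemma (the crux).} Suppose $f$ induces a total order in which two simplices $\sigma,\tau$ are adjacent and have \emph{equal} value $f(\sigma)=f(\tau)=v$, so that both orders ``$\dots\sigma\tau\dots$'' and ``$\dots\tau\sigma\dots$'' are legal sublevel-set filtrations (note this forces $\sigma$ and $\tau$ not to be in a face relation). I claim the two resulting persistence diagrams are \emph{identical} as multisets. The reason is that the reduced matrices for the two orders are related by conjugating with the transposition of $\sigma$ and $\tau$, followed by at most one column operation, and the vineyard-style case analysis of this update (as in \cite{vineyards}) shows that the assignment of birth/death simplices to diagram points either is unchanged or merely exchanges the roles of $\sigma$ and $\tau$ between two of the points (finite or essential). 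Since $f(\sigma)=f(\tau)=v$, exchanging $\sigma$ and $\tau$ in the labels of two points does not change the coordinates of those points, hence does not change the diagram as a multiset. I expect this to be the main obstacle: it requires working through each case of the transposition update and checking that every change to the pairing only moves the value $v$ between two points whose relevant coordinate already equals $v$. (A slightly less ``algorithmic'' alternative is to invoke the rank/inclusion--exclusion characterisation of the diagram: the two filtrations pass through the same subcomplexes except for the intermediate one, which carries the value $v$, so all ranks of inclusion-induced maps between subcomplexes at values away from $v$ agree, and the multiplicity of every diagram point is determined by such ranks.)

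\textbf{Assembly.} Let $0=t_0<t_1<\dots<t_N=1$ be the values of $t$ at which the order induced by $f_t$ changes; on each open interval $(t_{j-1},t_j)$ the order is constant. Each change at $t_j$ only re-orders simplices that attain a common $f_{t_j}$-value, and -- after a generic arbitrarily small perturbation of $f_1$, or by decomposing a simultaneous crossing into a sequence of adjacent transpositions within that block, consistently with the sublevel-set condition -- it is a composition of adjacent transpositions of simplices of equal $f_{t_j}$-value. By the transposition lemma the diagram does not jump at $t_j$: $\lim_{t\to t_j^-}\dgm(f_t)=\dgm(f_{t_j})=\lim_{t\to t_j^+}\dgm(f_t)$ as multisets. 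On each interval the same-order case gives $d_B(\dgm(f_s),\dgm(f_t))\le|s-t|\,\|f_0-f_1\|_\infty$, and letting $s,t$ tend to the endpoints, together with the no-jump property, extends this to the closed interval $[t_{j-1},t_j]$. Summing over $j$ via the triangle inequality for $d_B$ then yields
$$ d_B(\dgm(f_0),\dgm(f_1)) \;\le\; \sum_{j=1}^N (t_j-t_{j-1})\,\|f_0-f_1\|_\infty \;=\; \|f_0-f_1\|_\infty , $$
which is exactly the claim. (If a perturbation of $f_1$ was used, the bound holds up to an additive $\varepsilon$ with $\varepsilon\to 0$; since the diagrams are finite, the infimum over bijections is still at most $\|f_0-f_1\|_\infty$.)
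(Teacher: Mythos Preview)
Your proposal is correct and follows essentially the same route as the paper: linearly interpolate, partition $[0,1]$ at the finitely many crossing times, bound $d_B$ on each subinterval via the invariance of the simplex pairing, and telescope with the triangle inequality and the identity $\|f_\alpha-f_\beta\|_\infty=(\beta-\alpha)\|f_0-f_1\|_\infty$. The only difference is that the paper simply \emph{assumes} your transposition lemma as one of its three input facts about the algorithm (``any extension of a partial order to a total order produces the same diagram''), which lets it apply the same-order bound directly on each closed interval $[t(i),t(i+1)]$ and dispense with both your vineyard case analysis and your limit/perturbation step.
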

\noindent We will use the following three facts about the persistence algorithm:
\begin{enumerate}
\item The persistence algorithm returns the diagram in the form of a list of pairs of simplices $(\sigma, \tau)$, i.e. one pair per point in the diagram.  In fact, we take the pairs as the persistent diagrams. 
\item If the  ordering of simplices does not change, the persistence algorithm returns the same pairs. 
\item Any extension of a partial order to a total order produces the same diagram. 
\end{enumerate}
\section{Proof}
After running the persistence algorithm on the filtration induced by $f$. For each point in the computed persistence diagram $p\in \dgm(f)$, we have the corresponding pivots.
This defines the following map:
$$ \mathrm{piv}(p) = (\sigma,\tau)$$
where $\sigma$ and $\tau$ are the birth and death simplices respectively, i.e. $f(\sigma) = b(p)$ and    $f(\tau) = d(p)$. One way of interpreting this map is that it returns the pivots in the reduced boundary matrix in the algorithm. 
For convenience we use $\mathrm{piv}_b(p)$ and $\mathrm{piv}_d(p)$ to denote the simplices corresponding to birth and death times respectively.

To prove the result, we will linearly interpolate between $f_0$ and $f_1$. For $t\in[0,1]$ for each $\sigma \in K$
$$f_t(\sigma) = (1-t)\cdot f_0(\sigma) + t\cdot f_1(\sigma)$$
\begin{exercise}
Show that for each $t$, the sublevel sets each form a simplicial complex. 
\end{exercise}

For two functions $f_\alpha$ and $f_\beta$ with $\alpha<\beta \in[0,1]$, we say that the ordering of the simplices does not change if for any two simplices  $\sigma, \tau \in K$,
either
$$
f_t(\sigma) \leq f_{t}(\tau) 
$$
for all $t\in  [\alpha,\beta]$, or 
$$
f_t(\sigma) \geq f_{t}(\tau) 
$$
for all $t\in  [\alpha,\beta]$.
\begin{assumption}\label{as:one}
We assume that both $f_0$ and $f_1$  assign a unique value to each simplex, i.e. the ordering in both filtrations is unique. 
\end{assumption}
\begin{lemma}\label{lem:finite}
There are a finite number of values of $0\leq t\leq1$, where there exist simplices $\sigma$ and $\tau$ such that
$$f_t(\sigma) = f_t(\tau)$$
\end{lemma}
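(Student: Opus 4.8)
The plan is to reduce the statement to counting roots of finitely many affine functions. For each fixed pair of distinct simplices $\sigma,\tau \in K$, consider the function $g_{\sigma,\tau}(t) = f_t(\sigma) - f_t(\tau)$. Substituting the definition of the linear interpolation, $g_{\sigma,\tau}(t) = (1-t)(f_0(\sigma)-f_0(\tau)) + t(f_1(\sigma)-f_1(\tau))$, which is an affine (degree $\le 1$) function of $t$. So $g_{\sigma,\tau}$ either vanishes nowhere, vanishes at exactly one point, or vanishes identically on $[0,1]$.

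The next step is to rule out the identically-zero case using Assumption~\ref{as:one}. If $g_{\sigma,\tau} \equiv 0$ on $[0,1]$, then in particular $g_{\sigma,\tau}(0) = f_0(\sigma) - f_0(\tau) = 0$, contradicting the hypothesis that $f_0$ assigns a unique value to each simplex (equivalently, one could evaluate at $t=1$ and use the same property of $f_1$). Hence for every pair $\{\sigma,\tau\}$ of distinct simplices, $g_{\sigma,\tau}$ is a nonzero affine function and therefore has at most one root in $[0,1]$.

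Finally, I would take the union over all pairs. The set of $t \in [0,1]$ at which $f_t(\sigma) = f_t(\tau)$ for some $\sigma \neq \tau$ is exactly $\bigcup_{\{\sigma,\tau\}} g_{\sigma,\tau}^{-1}(0)$, a union of at most $\binom{|K|}{2}$ sets each of size at most one, hence finite. Since $K$ is a finite simplicial complex, $\binom{|K|}{2}$ is finite, and the conclusion follows.

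I do not expect any serious obstacle here: the only subtlety is making sure the identically-zero case is genuinely excluded, which is precisely what Assumption~\ref{as:one} buys us — without uniqueness of the orderings at $t=0$ and $t=1$ the lemma would be false (e.g. if $f_0 = f_1$ and they already tie two simplices, the tie persists for all $t$). It is worth noting explicitly in the write-up that we only need $g_{\sigma,\tau}(0)\neq 0$, so uniqueness of the $f_0$-ordering alone suffices, though invoking the full Assumption~\ref{as:one} is cleaner.
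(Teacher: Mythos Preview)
Your argument is correct and is essentially the same as the paper's own proof: both reduce to observing that for each pair $\{\sigma,\tau\}$ the difference $f_t(\sigma)-f_t(\tau)$ is affine in $t$, that Assumption~\ref{as:one} rules out the identically-zero case, and hence there are at most $\binom{n}{2}=\frac{n^2-n}{2}$ crossing values. The paper phrases this geometrically as ``linear interpolations crossing at most once,'' while you phrase it as ``nonzero affine functions have at most one root,'' but the content is identical.
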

\begin{proof}
By assumption, $t=0$ and $t=1$, there are no such simplices. Therefore, the only time two simplices have equivalent values is when their linear interpolations cross as in the following Figure.
\begin{center}
\begin{tikzpicture}
\draw (0,0) -- (0,3);
\draw (5,0) -- (5,3);
\node  at (0,-0.5) {$f_0$};
\node  at (5,-0.5) {$f_1$};
\draw (0,2.5) -- (5,0.5);
\draw (0,1.25) -- (5,2.2);
\node  at (-0.5,2.5) {$f_0(\sigma)$};
\node  at (-0.5,1.25) {$f_0(\tau)$};
\node  at (5.5,0.5) {$f_1(\sigma)$};
\node  at (5.5,2.2) {$f_1(\tau)$};
\draw[dashed] (2.115,0) -- (2.115,3);
\end{tikzpicture}
\end{center}
 So for every pair of simplices, there is at most one intersection. Assuming there are $n$ simplices, there are at most $\frac{n^2-n}{2}$ intersections. At all other points, the function values will be unique.
\end{proof}

\begin{lemma}\label{lem:easy}
Let $\alpha<\beta \in [0,1]$ such that
\begin{enumerate}
\item There is an $s\in   [\alpha,\beta]$ such that $f_s$ induces a total order.
\item  The ordering of the simplices in $  [\alpha,\beta]$  does not change .
\end{enumerate}
Then
$$ d_B(\dgm(f_\alpha),\dgm(f_{\beta})) \leq ||f_\alpha-f_\beta||_\infty$$ 
\end{lemma}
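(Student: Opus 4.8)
The plan is to exhibit one explicit bijection between $\dgm(f_\alpha)$ and $\dgm(f_\beta)$ — the one induced by the pivot map — and to show its cost is at most $||f_\alpha-f_\beta||_\infty$, which bounds the infimum defining $d_B$.

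First I would use the two hypotheses to produce a single linear order that is compatible with all of $f_\alpha$, $f_s$ and $f_\beta$ at once. Fix two simplices $\sigma,\tau\in K$; since $f_s$ induces a total order we may assume $f_s(\sigma)<f_s(\tau)$. Because the ordering of the simplices does not change on $[\alpha,\beta]$, the alternative $f_t(\sigma)\geq f_t(\tau)$ for all $t$ is impossible (it already fails at $t=s$), so $f_t(\sigma)\leq f_t(\tau)$ for all $t\in[\alpha,\beta]$; in particular $f_\alpha(\sigma)\leq f_\alpha(\tau)$ and $f_\beta(\sigma)\leq f_\beta(\tau)$. Hence the linear order $\prec$ determined by $f_s$ is a linear extension of the orders induced by $f_\alpha$ and by $f_\beta$, and it respects the face relation since every $f_t$ has simplicial sublevel sets.

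Next I would run the persistence algorithm on $f_\alpha$ and on $f_\beta$, in both cases using $\prec$ to resolve ties. By the third fact about the algorithm, these runs return $\dgm(f_\alpha)$ and $\dgm(f_\beta)$ respectively. But the two runs process the simplices in the \emph{same} order $\prec$, so by the second fact they return the same list of simplex pairs. Therefore the pivot map identifies the diagrams point by point: define $\pi:\dgm(f_\alpha)\to\dgm(f_\beta)$ by sending $p$ with $\mathrm{piv}(p)=(\sigma,\tau)$ to the unique $q\in\dgm(f_\beta)$ with $\mathrm{piv}(q)=(\sigma,\tau)$. Since $\pi$ is read off the algorithm's output lists pair by pair, it is a bijection respecting multiplicities and diagonal points. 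Finally I would bound the cost: for $p$ and $q=\pi(p)$ with common pivots $(\sigma,\tau)$ we have $b(p)=f_\alpha(\sigma)$, $d(p)=f_\alpha(\tau)$, $b(q)=f_\beta(\sigma)$, $d(q)=f_\beta(\tau)$, so
$$\max\{|b(p)-b(q)|,|d(p)-d(q)|\}=\max\{|f_\alpha(\sigma)-f_\beta(\sigma)|,|f_\alpha(\tau)-f_\beta(\tau)|\}\leq ||f_\alpha-f_\beta||_\infty.$$
Taking the maximum over $p\in\dgm(f_\alpha)$ and then the infimum over bijections yields
$$d_B(\dgm(f_\alpha),\dgm(f_\beta))\leq ||f_\alpha-f_\beta||_\infty.$$

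The step I expect to need the most care is the middle one: justifying that a single tie-breaking order $\prec$ is simultaneously a valid filtration order for $f_\alpha$ and $f_\beta$ (faces before cofaces, and consistency with the dimension convention where the function values tie), and that combining the second and third facts about the algorithm is legitimate — i.e.\ that the "no order change'' hypothesis is exactly what licenses transporting the pivot pairs from $f_\alpha$ to $f_\beta$. Once that identification of diagrams is in hand, the remainder is bookkeeping and the one-line sup-norm estimate above.
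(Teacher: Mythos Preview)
Your proposal is correct and follows essentially the same route as the paper: fix the linear order coming from $f_s$, observe that the persistence pairs are then identical for $f_\alpha$ and $f_\beta$, match points by their common pivots, and bound the cost of that matching by $\|f_\alpha-f_\beta\|_\infty$. If anything, you are a bit more explicit than the paper in invoking Facts~(2) and~(3) to justify that $\prec$ is a valid filtration order at the endpoints and yields the same pairs there, which is exactly the ``most care'' step you flagged.
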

\begin{proof}
First we must show that there exists a consistent ordering for the persistence algorithm in the interval, i.e. for all points in the diagram, the map $\mathrm{piv}(\cdot)$ is constant.
Choose the (unique) ordering induced at $f_s$. Since the ordering does not change throughout the interval, this is a valid ordering for all $t\in  [\alpha,\beta]$. As this ordering does not change, the persistence pairs returned by the algorithm are the same throughout the interval.

Now we construct a matching  between the diagrams $\mathbf{M}$ which maps points in $\dgm(f_\alpha)$ to points in $\dgm(f_\beta)$. 
$\mathbf{M}$ maps a point $p\in \dgm(f_a)$ to a point $q\in \dgm(f_b)$ if and only if 
$$\mathrm{piv}(p) = \mathrm{piv}(q)$$
Since the bottleneck distance is infimum over all matchings
$$d_B(\dgm(f_\alpha),\dgm(f_{\beta})) \leq \max\limits_{p\in \dgm(f_\alpha)} \max\{ |b(p) - b(\mathbf{M}(p))|, |d(p) - d(\mathbf{M}(p))\} $$
By definition we have  that 
$$b(p) = f_\alpha(\mathrm{piv}_b(p)),  \quad \quad d(p) = f_\alpha(\mathrm{piv}_d(p)),  $$ 
and
$$b(\mathbf{M}(p)) = f_\beta(\mathrm{piv}_b(\mathbf{M}(p))),  \quad \quad d(\mathbf{M}(p)) = f_\beta(\mathrm{piv}_d(\mathbf{M}(p))), $$ 
Substituting in we conclude the proof.
\begin{align*}
d_B(\dgm(f_a),\dgm(f_{b})) &\leq  \max\limits_{p\in \dgm(f_a)} \max\{ |f_a(\mathrm{piv}_b(p)) - f_b(\mathrm{piv}_b(\mathbf{M}(p)))|, f_a(\mathrm{piv}_d(p)) - f_b(\mathrm{piv}_d(\mathbf{M}(p)))| \} \\
&=   \max\limits_{\sigma\in K}  | f_a(\sigma) -f_b(\sigma)|  = ||f_a-f_b||_\infty
\end{align*}
\end{proof}
\begin{exercise}
Prove Lemma~\ref{lem:easy} without the assumption of uniqueness on $f_0$ and $f_1$, i.e. prove Lemma~\ref{lem:easy} without Assumption~\ref{as:one}.
\end{exercise}
\noindent We can now complete the proof.
\begin{proof}[Proof of Theorem~\ref{thm:main}]
Recall by Lemma~\ref{lem:finite}, that there are only finitely many values of $t\in [0,1]$, where the values of any two simplices coincide. We denote these values
$$ t(1) < t(2) <\ldots < t(k)$$
and we set $t(0)=0$ and $t(k+1)=1$. 

We now observe two facts. First,
notice that since we are considering  a linear interpolation so for any $\alpha<\beta \in [0,1]$
$$||f_\alpha - f_\beta||_\infty = (\beta-\alpha) ||f_0-f_1||_\infty $$
Second, if we restrict to  each of the intervals $[t(i),t(i+1)]$ for $i=0,\ldots k$, the conditions of Lemma~\ref{lem:easy} apply. So we have
\begin{align*}
d_B(\dgm(f_{t(i)}),\dgm(f_{t(i+1)})) \leq ||  f_{t(i)} - f_{t(i+1)} ||_\infty
\end{align*}
Using the triangle inequality,
\begin{align*}
d_B(\dgm(f_{0}),\dgm(f_{1})) &\leq \sum_{i=0}^k d_B(\dgm(f_{t(i)}),\dgm(f_{t(i+1)}))  \\
&\leq \sum_{i=0}^k  ||  f_{t(i)} - f_{t(i+1)} ||_\infty \\
& = \sum_{i=0}^k  (t(i+1)- t(i))    ||  f_0 - f_1 ||_\infty  \\
& = (t(k+1) - t(0)) ||  f_0 - f_1 ||_\infty  =  ||  f_0 - f_1 ||_\infty 
\end{align*}
which completes the proof.
\end{proof}
\begin{remark}
One of the important things to realize in this proof is that although we can choose a consistent ordering in each interval, there is necessarily a discontinuity in the choice at the intersection of two intervals (when the ordering is not consistent). The reason we can do this in the proof is that at that each of those points in the interpolation, there is consistent ordering from below and a different consistent ordering from above. At exactly the function value, there are multiple valid orderings, which by Fact (3), yields the same diagram.
\end{remark}
\bibliographystyle{unsrt}
\bibliography{main}

\end{document}